\documentclass[10pt,twoside]{siamart1116}

\usepackage[english]{babel}
\usepackage{graphicx,epstopdf,epsfig, verbatim}
\usepackage{amsfonts,epsfig,fancyhdr,graphics,amsmath,amssymb}

\newcommand{\HE}{Name of Handling Editor}
\newcommand{\DoS}{Month/Day/Year}
\newcommand{\DoA}{Month/Day/Year}
\newcommand{\CA}{Andrada Pojar}

\newcommand{\Names}{Andrada Pojar}
\newcommand{\Title}{A bound for the nilpotence index associated to \lowercase{$m$}-nil-clean nonderogatory matrices}

\newtheorem{remark}[theorem]{Remark}
\newtheorem{example}[theorem]{Example}

\newcommand{\F}{\mathbb{F}}
\newcommand{\M}{\mathcal{M}}

\newcommand{\Z}{{\mathbb Z}}

\newcommand{\Tr}{{\mathrm{Tr}}}

\renewcommand{\theequation}{\thesection.\arabic{equation}}
\renewtheorem{theorem}{Theorem}[section]

\begin{document}

\bibliographystyle{plain}

\setcounter{page}{1}

\thispagestyle{empty}

 \title{\Title\thanks{Received
 by the editors on \DoS.
 Accepted for publication on \DoA.
 Handling Editor: \HE. Corresponding Author: \CA}}

\author{
Andrada Pojar\thanks{Department of Mathematics,
Technical University of Cluj-Napoca, 28 Memorandumului Street, 400114, Cluj-Napoca, Romania.}
}

\markboth{\Names}{\Title}

\maketitle

\begin{abstract}
 It is proved that if $\F$ is a field of positive characteristic $p,$ and if $m$ and $n$ are positive integers such that $m\geq2,$ and $4 \leq n\leq p\leq mn-1,$ for every $n\times n$ nonderogatory matrix $A\in \M_n(\F),$ with trace in $\{k.1_{\F}\mid k\in \{0,1,\dots,p-1\}\},$ there exist $m$ idempotent matrices $E_1, E_2,\dots, E_m,$ and a nilpotent matrix $N$, such that $A=E_1+E_2+\dots+E_m+N,$ with $N^k=0,$ where $k=n$ if $p\in \{nm-1,nm-2\},$ $k=n-1$ if  $p=nm-3,$ otherwise $k=\mathrm{max}(2,1+\lfloor\frac{n-1}{r}\rfloor),$ if $n$ is even, and $k=\mathrm{max}(3,1+\lfloor\frac{n-1}{r}\rfloor),$ if $n$ is odd,  where $r:=\lfloor\frac{nm-p}{2}\rfloor.$ Moreover, for $4\leq n>p,$ $A$ is the sum of two idempotent matrices, and a square zero one, if $n$ is even,  and it is sum of two idempotent matrices and one which third power is zero, if $n$ is odd.
\end{abstract}

\begin{keywords}
$m$-nil-clean; nonderogatory matrix; companion matrix; nilpotent; idempotent.
\end{keywords}
\begin{AMS}
15A24, 15A83, 16U99.
\end{AMS}

\section{Introduction}

Let $\F$ be a field with positive characteristic $p.$
As usual, $\mathcal{M}_n(\F)$ will denote the full matrix ring of $n\times n$ matrices over $\F.$

A square matrix is {\it nonderogatory}, or also {\it cyclic} if its characteristic
and minimal polynomials coincide. A matrix is nonderogatory if and only if it is similar to a
companion matrix $C$.
We recall that a companion matrix $C\in \mathcal{M}_n(\F)$ is a matrix of the form:
$$C=C_{c_0,c_1,\ldots, c_{n-1}}=\left(\begin{array}{ccccc}
0 & 0  &\ldots & 0 & -c_0 \\
1 & 0  &\ldots & 0 & -c_1\\
\vdots & \vdots  &\cdots  & \vdots & \vdots \\
0 & 0 &\ldots & 1 & -c_{n-1}
 \end{array}\right).$$

 A square matrix $A$ over $\F$ is nil-clean if there exist an idempotent matrix $E,$ and a nilpotent matrix $N,$ such that $A=E+N.$ Let $m\geq 2$ be an integer. We consider decompositions of nonderogatory matrices $A,$ such that there exist idempotents $E_i$, $i\in \{1,2,\dots,m\},$ and a nilpotent $N,$ with $A=E_1+E_2\dots+E_m+N,$ which are $m$-nil-clean decompositions.

Nil-clean decompositions are related to clean ones, introduced by Nicholson in \cite{N}, when investigating exchange rings, and were first studied by Diesl in \cite{Diesl}. An important result appeared in \cite{BCDM} and \cite{KLZ} about them is: every $n\times n$ matrix over a division ring $D$ is nil-clean if and only if $D=\F_2.$

A theme related to a set $S$ of nilpotents of a ring is the study of boundness of the nilpotence index of S -- finding a positive integer $n$, as small as possible, such that $a^n=0$ for all $a\in S$ , and it was first studied in \cite{KWZ}. \v{S}ter has proved that there exists a nil-clean decomposition of every matrix of $\mathbb{M}_n(\F_2),$ with nilpotent $N$ having nilpotence index at most $4$ (i.e $N^4=0.$). Moreover this result has sharpness, that is, there exist $4\times 4$ companion matrices over $\F_2,$ which cannot be decomposed as a sum of an idempotent and a nilpotent matrix of nilpotence index at most $3.$

An extension of nil-clean decompositions to finite fields of odd cardinality $q,$ was done in \cite{AM}: every matrix over such a field is the sum of a $q$-potent $E=E^q,$ and a nilpotent. In \cite{B} there is even more -- the nilpotent $N$ involved in such a decomposition can be with nilpotence index at most $3$ (i.e. $N^3=0).$

In \cite{BrMe} it has been proved that every nil-clean nonderogatory $n\times n$ matrix $A,$ over a field of positive characteristic $p,$ with trace of $A$ not equal to $1,$ has a decomposition $A=E+V,$ such that $E^2=E,$ and $V^{p+1}=0.$ If $\mbox{trace}(A)=1,$ then there is a similar decomposition with $V^{p+2}=0.$

Another topic related to our purpose are matrices over a positive characteristic field, that are sums of idempotents. Their investigation is in \cite{Seguins}.
We study decompositions of nonderogatory matrices that are sums of $m$ idempotents, and a nilpotent. By Lemma 3.1 from \cite{C} we know that the trace of such a matrix is an integer multiple of unity of $\F.$
By the assumption $mn-1\geq p,$ every nonderogatory matrix with trace being an integer multiple of unity of $\F$ is $m$-nil-clean, considering the fact that companion matrices with trace in $\{1.1_{\F},2.1_{\F},\dots,(mn-1).1_{\F} \}$ are $m$-nil-clean, by \cite{C}. We prove that if $\F$ is a field of positive characteristic $p,$ and if $m$ and $n$ are positive integers such that $m\geq2,$ and $4\leq n\leq p\leq mn-1,$ for every $n\times n$ nonderogatory matrix $A\in \M_n(\F),$ with trace in $\{k.1_{\F}\mid k\in \{0,1,\dots,p-1\}\},$ there exist $m$ idempotent matrices $E_1, E_2,\dots, E_m,$ and a nilpotent matrix $N$, such that $A=E_1+E_2+\dots+E_m+N,$ with $N^k=0,$ where $k=n$ if $p\in \{nm-1,nm-2\},$ $k=n-1$ if  $p=nm-3,$ otherwise $k=\mathrm{max}(2,1+\lfloor\frac{n-1}{r}\rfloor),$ if $n$ is even, and $k=\mathrm{max}(3,1+\lfloor\frac{n-1}{r}\rfloor),$ if $n$ is odd,  where $r:=\lfloor\frac{nm-p}{2}\rfloor.$ Moreover, for $4\leq n>p,$ $A$ is the sum of two idempotent matrices, and a square zero one, if $n$ is even, and it is sum of two idempotent matrices and one which third power is zero, if $n$ is odd.
\section{$m$-nil-clean decompositions up to some diagonal matrix}

\begin{definition}
We say that a matrix $C$, is $m$-nil-clean up to a diagonal $D_0$ if there exist $m$ idempotent matrices $E_1,E_2,\dots, E_m,$  a nilpotent matrix $N,$ and a diagonal matrix $D_0,$ such that $C=E_1+E_2+\dots+E_m+N-D_0.$
\end{definition}

We point out a lemma, which will be called "the fitting lemma", which is a restatement of Lemma $1$ from \cite{Pojar-comp}, and it is an essential tool in our approach.

\begin{lemma}(The fitting lemma)\mbox{ }
Let $n$ be a positive integer, $\F$ a field, $C\in \M_n(\F),$ a companion matrix, and $D'\in \M_n(\F),$ a diagonal matrix with same trace as $C.$ Then there exists a companion matrix $C',$ with trace $0,$ such that $C\sim C'+D'.$
\end{lemma}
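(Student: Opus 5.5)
The plan is to write down $C'+D'$ explicitly, check that it is nonderogatory, and then choose the free entries of $C'$ so that its characteristic polynomial equals that of $C$. Since two nonderogatory matrices with the same characteristic polynomial are both similar to the same companion matrix, this gives $C\sim C'+D'$.

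Write $D'=\mathrm{diag}(d_1,\dots,d_n)$ and let $C'=C_{c_0',\dots,c_{n-2}',0}$ be a companion matrix whose last column is $(-c_0',\dots,-c_{n-2}',0)^T$. The coefficients $c_0',\dots,c_{n-2}'$ are unknowns, and the choice $c'_{n-1}=0$ forces $\Tr(C')=0$. Then $M:=C'+D'$ has only three kinds of nonzero entries:
\begin{itemize}
\item the entries $1$ on the subdiagonal;
\item the entries $d_1,\dots,d_n$ on the diagonal;
\item the entries $-c_{k-1}'$ in position $(k,n)$ for $k\le n-1$, in the last column.
\end{itemize}

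\textbf{Step 1: $M$ is nonderogatory.} For $i<n$ we have $Me_i=d_ie_i+e_{i+1}$, that is, $(M-d_iI)e_i=e_{i+1}$. Hence $e_{i+1}=\prod_{j\le i}(M-d_jI)e_1$, and $e_1$ is a cyclic vector for $M$. So the minimal and characteristic polynomials of $M$ coincide.

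\textbf{Step 2: the characteristic polynomial of $M$.} Set $P_j(x)=\prod_{i=1}^{j}(x-d_i)$, with $P_0=1$. Expressing $Me_n$ in the cyclic basis $e_{j+1}=P_j(M)e_1$ gives
\[
\chi_M(x)=P_n(x)+\sum_{j=0}^{n-2} c_j'\,P_j(x).
\]
Equivalently, one can expand $\det(xI-M)$ along the last column; the sign bookkeeping there is routine.

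\textbf{Step 3: solve for the $c_j'$.} Let $\chi_C$ be the characteristic polynomial of $C$. Both $\chi_C$ and $P_n$ are monic of degree $n$. Their $x^{n-1}$-coefficients are $-\Tr(C)$ and $-\sum d_i=-\Tr(D')$, which are equal by hypothesis. Hence $\chi_C-P_n$ has degree at most $n-2$. The polynomials $P_0,\dots,P_{n-2}$ are monic of degrees $0,\dots,n-2$, so they form a basis of the polynomials of degree $\le n-2$. Therefore there are unique scalars $c_0',\dots,c_{n-2}'\in\F$ with $\chi_C-P_n=\sum_j c_j'P_j$, that is, $\chi_M=\chi_C$.

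\textbf{Step 4: conclusion.} $C$ and $M$ are both nonderogatory with the same characteristic polynomial, so each is similar to the companion matrix of that polynomial. Hence $C\sim M=C'+D'$, with $\Tr(C')=0$.

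The only step where care is needed is the characteristic polynomial formula in Step 2, specifically getting the signs and indices right. The rest is linear algebra over an arbitrary field, and no assumption on the characteristic is used.
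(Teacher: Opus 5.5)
Your proof is correct. The paper gives no argument for this lemma; it quotes it as a restatement of Lemma 1 of \cite{Pojar-comp}. So there is no in-text proof to compare against, and your proposal supplies a self-contained proof of a step the paper takes on citation.

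Each step checks out:
\begin{itemize}
\item For $i<n$ the $i$-th column of $M=C'+D'$ is $d_ie_i+e_{i+1}$. Hence $e_{j+1}=P_j(M)e_1$, and $e_1$ is a cyclic vector.
\item The last column gives $P_n(M)e_1=(M-d_nI)e_n=-\sum_{j=0}^{n-2}c_j'P_j(M)e_1$. So the monic degree-$n$ polynomial $P_n+\sum_{j=0}^{n-2}c_j'P_j$ annihilates a cyclic vector. It is therefore the minimal polynomial of $M$, and hence its characteristic polynomial. Writing Step 2 this way, rather than expanding $\det(xI-M)$, avoids all the sign bookkeeping you flagged.
\item The trace hypothesis is used exactly once, to make $\chi_C-P_n$ of degree at most $n-2$. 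The triangular basis $P_0,\dots,P_{n-2}$ then determines the $c_j'$.
\end{itemize}

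Your argument also gives a bit more than is stated. Given $D'$, the matrix $C'$ is unique. The proof works over an arbitrary field with no characteristic assumption, and the degenerate case $n=1$ goes through with empty sums. This is all that Lemma \ref{zero} requires of the fitting lemma.
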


We provide a way to obtain $m$-nil clean decompositions, for an $n\times n$ companion matrix, up to some diagonal matrix, with nilindex of the involved nilpotent less than $n,$  by considering a good partition of $n-1.$

\begin{definition}
We call a nontrivial partition of $n-1=d_1+d_2+\dots+d_r,$ into positive integers, to be a {\it good partition} if $d_i>1,$ for all $i\in [[2,r]].$
\end{definition}

\begin{definition}
We say an $n\times n$ nilpotent $N$ is associated to the good partition $n-1=d_1+\dots+d_r,$ if there exist matrices $G_1,\dots,G_r$ such that $G_1$ is arbitrary, the first row of each of the matrices $G_2,\dots, G_r$ is a zero row, and $N$ is of the following form:
$$N=\left(\begin{array}{cccc}
J_{d_1} & {[}0{]} & {[}0{]} & G_1 \\
{[}0{]} & \ddots & \mbox{ } & \vdots \\
\vdots & \mbox{ } & J_{d_r} & G_r \\
{[}0{]} & \mbox{ } & {[}0{]} & 0
\end{array}\right),$$
where $J_k$ denotes the lower triangular nilpotent Jordan cell of size $k.$
\end{definition}

\begin{remark}\label{max}
An $n\times n$ nilpotent $N$ associated to the partition $n-1=d_1+d_2+\dots+d_r,$ has nilindex no more than $\mathrm{max}(d_1+1,d_2,\dots,d_r)$ (this uses the critical observation that $G_2,\dots, G_r$ have their first row zero).
\end{remark}

\begin{definition}
We say an $n\times n$ nilpotent $N$ is associated to the trivial partition  $n-1=d_1,$ if there exists the matrix $G_1$ such that $G_1$ has the first row zero, and $N$ is of the following form:
$$N=\left(\begin{array}{cccc}
J_{d_1} & G_1 \\

{[}0{]} & 0
\end{array}\right),$$
where $J_{d_1}$ denotes the lower triangular nilpotent Jordan cell of size $d_1.$
\end{definition}

\begin{remark}\label{n-1bound}
An $n\times n$ nilpotent $N$ associated to the trivial partition $n-1=d_1,$ has nilindex no more than $n-1$ (this uses the critical observation that $G_1$ has its first row zero). We emphasize that if $G_1$ was arbitrary, then we would have obtained the bound $n$ for the nilindex of $N.$
\end{remark}

\begin{definition}\label{naiv}
We say that the companion matrix $C$ is $m$-nil-clean with respect to the good or trivial partition $n-1=d_1+d_2+\dots+d_r,$ if $C\sim E_1+E_2+\dots+E_m+N,$ where $E_1,E_2,\dots,E_m$ are idempotents, and $N$ is a nilpotent associated to the good or trivial partition.
\end{definition}

\begin{definition}
We call a decomposition for a companion matrix $C= E_1+E_2+\dots+E_m+N-D_0$ to be an $m$-nil-clean decomposition up to the diagonal $D_0,$ with respect to the good or trivial partition $n-1=d_1+d_2+\dots+d_r,$ if $E_1, E_2,\dots, E_m$ are idempotents, and $N$ is a nilpotent associated to the good or trivial partition.
\end{definition}

\begin{example}\label{std}
Let $C$ be an $n\times n$ companion matrix, over a field $\F$ of positive characteristic $p,$ with $\Tr(C)\in \{k.1_{\F}\mid k\in \{0,1,\dots,p-1\} \}$, $n>2,$ a good partition $n-1=d_1+d_2+\dots+d_r,$ and $m\geq 2$ an integer such that $nm-1\geq p\geq n.$ Consider the following matrices:
\\$F_1=\sum_{i=2}^r E_{d_1+\dots+d_{i-1}+1,d_1+\dots+d_{i-1}+1},$ $F_2$ the matrix that has the same rows as $C$ for the indices in $\{d_1+1,d_1+d_2+1,\dots,d_1+\dots+d_{i-1}+1\}$, and zero rows elsewhere, $F_3=$ the sum of any number of matrices of the form $E_{i,i}$, with either
$i<d_1,$ or $i\in [[d_1+\dots+d_{j-1}+2,d_1+\dots+d_j-1]],$ for some $j\in [[2,r-1]],$ or $i\in [[d_1+\dots+d_{r-1}+2,d_1+\dots+d_r]].$ Because the partition is good, $E_1=F_1+F_2+F_3$ is idempotent. Moreover, the trace of $E_1$ can take any value of the form $k.1_F,$ with $k\in [[r-1,n-r]].$
Now take $E_2$ an idempotent of the form $\sum_{i=1}^kE_{i,i}+(E_{n,n-1}+E_{n,n}),$ with $k\leq n-2.$ We observe that the possible traces for $E_2$ are elements of the form $\{k.1_{\F}\mid 1\leq k\leq n-1\}.$ We'll be calling $E_1,$ and $E_2$ "severance matrices". Let $E_3,\dots, E_m$ be diagonal idempotents. Each of them can have any possible trace in
$\{k.1_{\F}\mid 0\leq k\leq n\}.$ We have that there exist a nilpotent $N$ associated to the partition $n-1=d_1+d_2+\dots+d_r,$ and a diagonal $D_0,$ (which depends only on the choice of the diagonal entries of
$E_i$'s) such that $C=E_1+E_2+\dots+E_m+N-D_0.$

We point out for further use that the possible traces for the matrix $\sum_{k=1}^mE_k$ constitute a set of the form $\{k.1_{\F},\mid k\in [[[a,b]]\},$
for some integers $a$ and $b,$ such that $b-a=(n-2r+1)+(n-2)+(m-2)n=mn-2r-1,$ with the $n-2r+1$ term coming from the slack with respect to the choice of the diagonal entries of $E_1,$ and so on.
\end{example}

\begin{example}\label{d1}
Let $C$ be an $n\times n$ companion matrix, over a field $\F$ of positive characteristic $p,$  with $\Tr(C)\in \{k.1_{\F}\mid k\in \{0,1,\dots,p-1\} \}$,  a trivial partition $n-1=d_1,$ and $m\geq 2$ an integer such that $nm-1\geq p\geq n.$ Consider the following matrices:
$F_1$- the matrix with the same first row as $C,$ and zero rows elsewhere, $F_2=\sum_{k=1}^{j}E_{k,k},$ for some $j\in [[2,n-1]].$ Let the severance matrix $E_1$ be the idempotent $F_1+F_2.$ The trace of $E_1$ can take any value of the form $k.1_{\F},$ with $k\in [[1,n-1]].$ We consider $E_2,\dots,E_m$ as in Example \ref{std}. We have that there exist a nilpotent $N$ associated to the partition $n-1=d_1,$ and a diagonal $D_0$ (which depends only on the choice of the diagonal entries of
$E_i$'s), such that $C=E_1+E_2+\dots+E_m+N-D_0.$

We point out for further use that the possible traces for the matrix $\sum_{k=1}^mE_k$ constitute a set of the form $\{k.1_{\F},\mid k\in [[[a,b]]\},$
for some integers $a,$ and $b,$ such that $b-a=(n-2)+(n-2)+(m-2)n=nm-4.$
\end{example}

\begin{example}\label{diagE1}
Let $C$ be an $n\times n$ companion matrix, over a field $\F$ of positive characteristic $p,$  with $\Tr(C)\in \{k.1_{\F}\mid k\in \{0,1,\dots,p-1\} \},$ and $m\geq 2$ an integer such that $nm-1\geq p\geq n.$
If we change in Example \ref{std} the severance matrix $E_1,$ with a diagonal idempotent matrix, we get an example of $m$-nil-clean decomposition for $C$, up to some diagonal matrix. We note that the involved nilpotent is not associated to a partition of $n-1,$ having instead the first $n-1$ rows of $C,$ and a zero last row.

We point out for further use that the possible traces for the matrix $\sum_{k=1}^mE_k$ constitute a set of the form $\{k.1_{\F},\mid k\in [[[a,b]]\},$
for some integers $a,$ and $b,$ such that $b-a=n+(n-2)+n(m-2)=nm-2.$
\end{example}

\section{$m$-nil-clean nonderogatory matrices}

We denote by $\textrm{diag}(A)=\textrm{diag}(B)$ that matrices $A$ and $B$ have the same diagonal entries.

\begin{lemma}\label{zero}
Let $C$ be an $n\times n$ companion matrix $C,$ over a field $\F$ of positive characteristic $p,$ $m\geq 2$ an integer, $n-1=d_1+d_2+\dots+d_r,$ a good or trivial partition. Assume that in an $m$-nil-clean decomposition for $C$, up to a diagonal $D_0$, with respect to the taken partition, idempotents $E_1,E_2,\dots E_m$ are as in one of the Examples \ref{std}, \ref{d1}, \ref{diagE1}, such that $\Tr(C-E_1-E_2-\dots-E_m)=0.$ Then $C$ is $m$-nil-clean.
\end{lemma}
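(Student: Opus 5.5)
The plan is to remove the diagonal correction $D_0$ by moving it into the companion matrix itself, using the fitting lemma. Set $S=E_1+E_2+\dots+E_m$, with the $E_i$ as in Example \ref{std}, \ref{d1} or \ref{diagE1}, and let $D'=\mathrm{diag}(S)$ be the diagonal matrix carrying the diagonal entries of $S$. The hypothesis $\Tr(C-S)=0$ says exactly that $\Tr(D')=\Tr(S)=\Tr(C)$. The fitting lemma then gives a companion matrix $C'$ with $\Tr(C')=0$ and $C\sim C'+D'$. It therefore suffices to show that $C'+D'$ is a sum of $m$ idempotents and a nilpotent.

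Next, rebuild the same type of decomposition for $C'$, keeping every diagonal choice fixed. The examples use rows of $C$ in only two places:
\begin{itemize}
\item the matrix $F_2$ in Example \ref{std}, which copies rows $d_1+\dots+d_{i-1}+1$ of $C$ (all indices at most $n-1$);
\item the matrix $F_1$ in Example \ref{d1}, which copies the first row of $C$.
\end{itemize}
Replace these rows by the corresponding rows of $C'$, and leave all the diagonal idempotent parts ($F_1$, $F_3$ in Example \ref{std}, $F_2$ in Example \ref{d1}, and $E_2,\dots,E_m$) unchanged. This gives matrices $E_1',\dots,E_m'$.

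Two checks are needed for the $E_i'$.
\begin{itemize}
\item They are still idempotent. The verification in the examples used only the positions of the nonzero entries and the fact that a row of a companion matrix with index at most $n-1$ has zero diagonal entry. Both remain true for $C'$.
\item $\mathrm{diag}(E_1'+\dots+E_m')=D'$. The copied rows of $C'$ have index at most $n-1$, so they contribute nothing to the diagonal, since a companion matrix has zero diagonal outside the $(n,n)$ entry.
\end{itemize}

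Now define $N'=C'+D'-(E_1'+\dots+E_m')$. Its diagonal is $\mathrm{diag}(C')+D'-D'=\mathrm{diag}(C')$. This is zero except possibly the $(n,n)$ entry, which equals $\Tr(C')=0$. So $N'$ has zero diagonal.

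Off the diagonal, $N'$ agrees with the nilpotent produced in the corresponding example for the companion matrix $C'$, that is, with $D_0$ replaced by $0$. In the partition cases, the rows of $C'$ subtracted through $E_1'$ cut the subdiagonal of ones into the Jordan blocks $J_{d_i}$, leaving the last column as the blocks $G_i$, with the required zero first rows. The last row is killed by $E_{n,n-1}+E_{n,n}$ in $E_2'$ together with the vanishing $(n,n)$ entry. Thus $N'$ is associated to the given good or trivial partition, and Remarks \ref{max} and \ref{n-1bound} show that $N'$ is nilpotent. In the case of Example \ref{diagE1}, $N'$ is strictly lower triangular plus its last column, with zero last row and zero diagonal. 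It is therefore strictly lower triangular apart from the last column above a zero corner, and a direct check shows it is nilpotent.

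Hence $C\sim E_1'+\dots+E_m'+N'$. Conjugating back gives idempotents and a nilpotent summing to $C$, so $C$ is $m$-nil-clean.

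The main obstacle is the bookkeeping in the second step. One must verify carefully, for each of the three examples, that replacing $C$ by $C'$ preserves idempotency and that no copied row touches the diagonal. I would check this first against the exact index sets of $F_1$, $F_2$, $F_3$, paying particular attention to the trivial-partition case, where row $1$ of $C'$ is copied.
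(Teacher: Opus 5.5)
Your proposal is correct and follows the paper's proof. In both, you take $D'$ with the diagonal of $E_1+\dots+E_m$, apply the fitting lemma to get $C\sim C'+D'$ with $\Tr(C')=0$, rebuild the severance matrix from the rows of $C'$ (position-only idempotency, zero diagonal in rows $\le n-1$), and use $\Tr(C')=0$ to kill the diagonal correction. The only difference is cosmetic: you define $N'=C'+D'-(E_1'+\dots+E_m')$ directly and check its zero diagonal, whereas the paper first writes a decomposition of $C'$ up to a diagonal $D_0'$ and then shows $D_0'=D'$.
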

\begin{proof}
Let $D'$ be a diagonal matrix such that $\textrm{diag}(D')=\textrm{diag}(E_1+\dots+E_m).$ So $\Tr(D')=\Tr(C).$ By The fitting lemma, there exists a companion $C',$ with trace $0$ such that $C\sim C'+D'.$
 Let $E_1'$ as $E_1$ in Example \ref{std}, or Example \ref{d1} respectively, but obtained from lines of $C',$ and $E_1'=E_1$ in case of Example \ref{diagE1}.
 Then there exist nilpotent matrix $N'$ and diagonal $D_0',$ such that $C'=E_1'+E_2+\dots+E_m+N'-D_0'$ is an $m$-nil-clean decomposition for $C',$ up to the diagonal $D_0'$ (with respect to the given good or trivial partition in the first two cases).
 We have $\textrm{diag}(E_1)=\textrm{diag}(E_1'),$ and $E_1'$ differs from $E_1$ only in its last column, and not in its last entry. We get $D_0'=D_0,$ and
 $\textrm{diag}(E_1'+E_2+\dots+E_m)=\textrm{diag}(E_1+E_2+\dots+E_m)=\textrm{diag}(D').$
Since $\textrm{diag}(C')=\textrm{diag}(0),$ and $\textrm{diag}(N')=\textrm{diag}(0)$, it follows that $\textrm{diag}(E_1'+E_2+\dots+E_m)=\textrm{diag}(D_0).$ Therefore $\textrm{diag}(D')=\textrm{diag}(D_0)$, and since $D',$ and $D_0$ are diagonals, we get that $D'=D_0.$ In conclusion $C\sim E_1'+E_2+\dots+E_m+N'.$
\end{proof}

\begin{remark}\label{key}
 We can obtain an $m$-nil-clean decomposition for a companion matrix $C$, over a field of positive characteristic $p,$ with $\Tr(C)\in \{k.1_{\F}\mid k\in \{0,1,\dots,p-1\} \},$ provided that $b-a\geq p-1,$ where $a$ and $b$ are integers such that the possible traces for the sum of all idempotents from a decomposition like in one of the Examples \ref{std}, \ref{d1}, \ref{diagE1}, constitute a set of the form $\{k.1_{\F},\mid k\in [[a,b]]\}.$
Indeed, in this case all possible traces in $ \{k.1_{\F}\mid k\in \{0,1,\dots,p-1\} \}$ can be reached, and hence there exists a decomposition as in one of the three examples, with
idempotents $E_1,E_2,\dots,E_m,$ such that $\Tr(E_1+\dots+E_m)=\Tr(C).$ Then by Lemma \ref{zero} we get an $m$-nil-clean decomposition for $C.$

Moreover, if we have a good partition $n-1=d_1+\dots+d_r$ (case of Example \ref{std}),  the involved nilpotent of the $m$-nil-clean decomposition is similar to a nilpotent associated to that good partition, so we have a nilindex no larger than $\mathrm{max}(d_1+1,d_2,\dots,d_r),$ by Remark \ref{max}. For the case of a trivial partition $n-1=d_1$ (case of Example \ref{d1}), the involved nilpotent of the $m$-nil-clean decomposition is similar to a nilpotent associated to that trivial partition, so we have a nilindex no larger than $n-1,$ by Remark \ref{n-1bound}.
\end{remark}

In the following theorem we are about to prove that some nonderogatory matrices are $m$-nil-clean. Since for any nonderogatory matrix $A$ there exists a companion matrix $C,$ such that $A\sim C,$ it is sufficient to prove the statement for companion matrices.

\begin{theorem}
Let $\F$ be a field of positive characteristic $p,$ and $n\geq 4$ and $m\geq 2$ be positive integers, such that $nm-1\geq p\geq n.$ Then for every $n\times n$ nonderogatory matrix $A\in \M_n(\F),$ with trace in $\{k.1_{\F}\mid k\in \{0,1,\dots,p-1\}\},$ there exist $m$ idempotent matrices $E_1, E_2,\dots, E_m,$ and a nilpotent matrix $N$, such that $A=E_1+E_2+\dots+E_m+N,$ with $N^k=0,$ where $k=n$ if $p\in \{nm-1,nm-2\},$ $k=n-1$ if  or $p=nm-3,$ otherwise $k=\mathrm{max}(2,1+\lfloor\frac{n-1}{r}\rfloor),$ if $n$ is even, and $k=\mathrm{max}(3,1+\lfloor\frac{n-1}{r}\rfloor),$ if $n$ is odd, where $r:=\lfloor\frac{nm-p}{2}\rfloor.$
\end{theorem}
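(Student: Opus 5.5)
We may replace $A$ by a similar companion matrix $C$, since conjugating the idempotents and the nilpotent preserves idempotence and nilpotence index. By Remark \ref{key}, it then suffices to pick one of Examples \ref{std}, \ref{d1}, \ref{diagE1} (with a suitable partition) for which the achievable traces of $E_1+\dots+E_m$ form an interval $[[a,b]]$ with $b-a\geq p-1$. Remark \ref{key} then gives an $m$-nil-clean decomposition whose nilpotent is similar to one associated to the partition, and its nilindex is read off from Remark \ref{max} or Remark \ref{n-1bound}. The proof is therefore a case analysis on $nm-p$, choosing the example with the smallest resulting nilindex.

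\emph{Case $p\in\{nm-1,nm-2\}$.} We use Example \ref{diagE1}, where $b-a=nm-2\geq p-1$. The nilpotent there has its last row zero. A nilpotent has nilindex at most $n$ trivially, so $N^n=0$.

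\emph{Case $p=nm-3$.} We use Example \ref{d1} with the trivial partition $n-1=d_1$. Here $b-a=nm-4=p-1$, and Remark \ref{n-1bound} gives $N^{n-1}=0$.

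\emph{Case $p\leq nm-4$.} We use Example \ref{std} with a good partition into $r$ parts, where $r=\lfloor\frac{nm-p}{2}\rfloor$. The requirement $b-a=mn-2r-1\geq p-1$ is equivalent to $2r\leq nm-p$, which holds by the choice of $r$. It remains to choose $d_1,\dots,d_r$ with $d_i\geq 2$ for $i\geq 2$ so as to minimize $\max(d_1+1,d_2,\dots,d_r)$.

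Write $n-1=qr+s$ with $0\leq s<r$, and distribute the sizes as evenly as possible. The parts of size $q+1$ go to indices $i\geq2$, and $d_1$ is taken to be $q$ (or $q-1$ if we want to shave it further). This makes the maximum at most $1+q=1+\lfloor\frac{n-1}{r}\rfloor$, using the slack from the $+1$ on $d_1$. The goodness condition $d_i\geq 2$ needs $q$ large enough.

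When $r$ is so large that $\lfloor\frac{n-1}{r}\rfloor\leq 1$, we must instead use fewer parts: take $r'=\lfloor\frac{n-1}{2}\rfloor$ or a nearby value, which is admissible since decreasing $r$ only increases $b-a$. With $n$ even, $n-1$ is odd; taking $d_1=1$ and all other $d_i=2$ gives maximum $2$. With $n$ odd, $n-1$ is even, so some part other than $d_1$ must be $3$, or else $d_1=2$ giving $d_1+1=3$; either way the bound is $3$. This yields the $\max(2,\cdot)$ and $\max(3,\cdot)$ in the statement.

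One check remains: Example \ref{std} requires the trace of $E_1$ to range over $[[r-1,n-r]]$, which needs $r-1\leq n-r$. Since $r$ is only used when $r\leq (n-1)/2$ (after capping), this holds.

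The main obstacle is the bookkeeping in the last case. We must verify that the even distribution with $d_1$ one smaller really attains $1+\lfloor\frac{n-1}{r}\rfloor$ rather than $1+\lceil\cdot\rceil$, and we must handle the boundary values of $r$ and the parity of $n$ cleanly. I would first fix $d_1=q$, assign $q+1$ to $s$ of the remaining parts and $q$ to the rest, and check the sum: $q+(r-1)q+s=qr+s=n-1$. The maximum is then $\max(q+1,q+1)=q+1$, and goodness requires $q\geq 2$, i.e.\ $r\leq\frac{n-1}{2}$. This confirms the formula and isolates exactly the small-$q$ regime handled by the parity argument.
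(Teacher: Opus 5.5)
Your proposal is correct and follows essentially the same route as the paper. You use the same three-way split on $nm-p$ via Examples \ref{diagE1}, \ref{d1}, \ref{std}, the same Euclidean-division partition with $d_1=q$ in the main case, and the same partitions $1+2+\dots+2$ or $2+\dots+2$ when $r$ is large.

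The differences are cosmetic. You place the boundary at $r\le\frac{n-1}{2}$, whereas the paper uses $r<\frac{n+1}{2}$ and absorbs $r=n/2$ ($n$ even) into the Euclidean case with $q=1$, $a=r-1$. You justify the trace slack for the large-$r$ partitions by monotonicity of $mn-2r-1$ in $r$, whereas the paper uses only the slack $(n-2)+(m-2)n$ of $E_2,\dots,E_m$.
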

\begin{proof}

Let $C$ be the companion matrix such that $A\sim C.$ We can assume that $n\geq 2,$ since in the case $n=1,$ the only idempotents are $0$ and $1,$ and the only nilpotent is $0,$ so the conclusion here is proved.
\begin{itemize}
\item Case $p\notin \{nm-1,nm-2,nm-3\},$ and $n\geq 4.$
\begin{itemize}
\item Case $r<\frac{n+1}{2}.$ Take integers $q,$ and $a\in [[0,r-1]],$ obtained by computing the Euclidean division $n-1=rq+a,$ and the partition $n-1=\underbrace{q+\dots+q }_{r-a \textrm{ times }q}+\underbrace{(q+1)\dots+(q+1) }_{a \textrm{ times }q+1}.$ We have $rq+a=n-1>2r-2.$
    We obtain the following subcases:
    \begin{itemize}
    \item $q=0.$  We obtain $r<\frac{n+1}{2}< n-1=a\leq r-1,$ which is a contradiction.
    \item $q=1.$ We have $r<\frac{n+1}{2}=\frac{r\cdot 1+a+2}{2}.$ It follows that $a\geq r-1\geq a.$ Therefore $q=1,$ and $a=r-1,$ which leads to a good partition.
    \item $q>1,$ then the partition is good.
    \end{itemize}
    Since the partition is good, there exists an $m$-nil-clean decomposition for $C,$ up to some diagonal matrix, with respect to this partition, like in Example \ref{std}. There, the slack for the trace of the sum of idempotents was $b-a=nm-2r-1\geq p-1$ (because $r=\lfloor\frac{nm-p}{2}\rfloor$). Hence, by Remark \ref{key}, we get that there exist $E_1,\dots,E_m$ idempotent matrices, and $N$ a nilpotent, such that $C=E_1+\dots+E_m+N,$ with $N^k=0,$ where $k=q+1=1+\lfloor \frac{n-1}{r}\rfloor.$

\item Case $r\geq \frac{n+1}{2},$ then $nm-p\geq n+1.$ It follows that  $(n-2)+(m-2)n\geq p-1.$ Take the good partition $n-1=2+\dots+2,$ if $n$ is odd, or $n-1=1+2+\dots+2,$ if $n$ is even.
    Since the partition is good, there exists an $m$-nil-clean decomposition for $C,$ up to some diagonal matrix, with respect to this partition, like in Example \ref{std}. There, the slack for the trace of the sum of the last $m-1$ idempotents was $(n-2)+(m-2)n\geq p-1.$  Hence, by Remark \ref{key}, we get that there exist $E_1,\dots,E_m$ idempotent matrices, and $N$ a nilpotent, such that $C=E_1+\dots+E_m+N,$ with $N^2=0,$ if $n$ is even, and $N^3=0,$ if $n$ is odd. This is due to the fact that $\textrm{max}(1+1,2,\dots,2)=2,$ while $\textrm{max}(2+1,2,\dots,2)=3.$
\end{itemize}
In conclusion, for this principal case there exist $E_1,\dots,E_m$ idempotent matrices, and $N$ a nilpotent, such that $C=E_1+\dots+E_m+N,$ with $N^k=0,$ where $k=\textrm{max}(2,1+\lfloor \frac{n-1}{r}\rfloor),$ if $n$ is even, and $k=\textrm{max}(3,1+\lfloor \frac{n-1}{r}\rfloor),$ if $n$ is odd.
\item Case $p=nm-3.$ Here we have $r=\lfloor\frac{nm-p}{2} \rfloor=1.$ Take the trivial partition $n-1=d_1.$ Then, there exists an $m$-nil-clean decomposition for $C,$ up to some diagonal matrix, with respect to this partition, like in Example \ref{d1}. There, the slack for the trace of the sum of idempotents was $b-a=nm-4\geq p-1,$ and by Remark \ref{key}, we get that there exist $E_1,\dots,E_m$ idempotent matrices, and $N$ a nilpotent, such that $C=E_1+\dots+E_m+N,$ with $N^k=0,$ where $k=n-1.$
\item Case $p\in \{nm-2,nm-1 \}.$ We are not claiming a better nilindex bound than n. There exists an $m$-nil-clean decomposition for $C,$ up to some diagonal matrix, like in Example \ref{diagE1}. There, the slack for the trace of the sum of idempotents was $b-a=nm-2\geq p-1.$ Hence, by Remark \ref{key}, we get that there exist $E_1,\dots,E_m$ idempotent matrices, and $N$ a nilpotent matrix, such that $C=E_1+\dots+E_m+N.$
\end{itemize}
\end{proof}

\begin{remark}
Assume $4\leq n>p.$ Then with $m=2,$ we have $n(m-2)+(n-1)\geq p>p-1;$ hence this case can then be treated as we did for the case $r\geq \frac{n+1}{2},$ of the theorem.  This yields that any nonderogatory matrix $A\in \M_n(\F),$ with trace in $\{k.1_{\F}\mid k\in \{0,1,\dots,p-1\}\},$ is the sum of two idempotent matrices, and a square zero one, if $n$ is even, and it is sum of two idempotent matrices and one with nilindex no larger than $3,$ if $n$ is odd.

\end{remark}
\begin{remark}
 Assume $n= 3$, $p\in \{2,3\}.$ Take $m=2.$ Take the trivial partition $3-1=2.$  Then, there exists an $m$-nil-clean decomposition for $C,$ up to some diagonal matrix, with respect to this partition, like in Example \ref{d1}.  There, the slack for the trace of the sum of idempotents was $b-a=nm-4=2\geq p-1,$ and by Remark \ref{key}, we get that there exist $E_1,E_2$ idempotent matrices, and $N$ a nilpotent, such that $C=E_1+E_2+N,$ with $N^k=0,$ where $k=n-1=2.$
 \end{remark}

\textbf{Acknowledgements.}\mbox{ } The author thanks the reviewer for the suggestions, comments, and remarks, which led to an improvement of this paper.

\end{document}